\numberwithin{equation}{section}
\newtheorem{thm}{Theorem}[section]
\newtheorem{prop}[thm]{Proposition}
\newtheorem{lem}[thm]{Lemma}
\newtheorem{dfn}{Definition}
\newtheorem{cor}[thm]{Corollary}
\numberwithin{equation}{section}
\newcommand{\A}{\mathbb{A}}
\newcommand{\Q}{\mathbb{Q}}
\newcommand{\R}{\mathbb{R}}
\newcommand{\Z}{\mathbb{Z}}
\newcommand{\mcO}{\mathcal{O}}
\newcommand{\mfa}{\mathfrak{a}}
\newcommand{\mfb}{\mathfrak{b}}
\newcommand{\mfc}{\mathfrak{c}}
\newcommand{\h}{\mathfrak{h}}
\newcommand{\mfm}{\mathfrak{m}}
\newcommand{\mfn}{\mathfrak{n}}
\newcommand{\mfr}{\mathfrak{r}}
\newcommand{\Dif}{\mathfrak{D}}
\newcommand{\mfp}{\mathfrak{p}}
\newcommand{\SL}{\mathrm{SL}}
\newcommand{\GL}{\mathrm{GL}}
\newcommand{\SO}{\mathrm{SO}}
\def\coloneqq{\mathrel{\mathop:}=}%
\newcommand{\ra}{\rightarrow}
\def\1{1\!\!1}
\newcommand{\pmat}[4]{ \begin{pmatrix} #1 & #2 \\ #3 & #4 \end{pmatrix}}
\def\dis{\displaystyle}
\title[Equidistribution of signs for HMF of half-integral weight]{Equidistribution of signs for Hilbert modular forms of half-integral weight}
\author[S. Kaushik]{Surjeet Kaushik}
\address[S. Kaushik]{Department of Mathematics, Indian Institute of Technology Hyderabad, Kandi, Sangareddy 502285, INDIA}
\email{amarsurjeetkaushik@gmail.com}
\author[N. Kumar]{Narasimha Kumar}
\address[N. Kumar]{Department of Mathematics, Indian Institute of Technology Hyderabad, Kandi, Sangareddy 502285, INDIA}
\email{narasimha.kumar@iith.ac.in}
\author[N. Tanabe]{Naomi Tanabe}
\address[N. Tanabe]{Department of Mathematics, Bowdoin College, Brunswick, Maine 04011, USA}
\email{ntanabe@bowdoin.edu}
\keywords{Hilbert modular forms, Half-integral weight modular forms, Fourier coefficients, Sign changes, Equidistribution}
\subjclass[2010]{Primary 11F37, 11F30; Secondary 11F03, 11F41}
\date{\today}
\begin{document}

\begin{abstract}
We prove an equidistribution of signs for the Fourier coefficients of Hilbert modular forms of half-integral weight. 
Our study focuses on certain subfamilies of coefficients that are accessible via the Shimura correspondence. 
This is a generalization of the result of Inam and Wiese \cite{IW13} to the setting of totally real number fields. 
\end{abstract}
\maketitle
             
\section{Introduction}
The Fourier coefficients of integral or half-integral weight modular forms over number fields have been extensively studied 
because of rich arithmetic and algebraic properties that they encompass. In recent years, many problems addressing the sign changes 
of these Fourier coefficients have been studied by various authors. 
In this article, we are interested in the equidistribution of signs of these Fourier coefficients.

For the integral weight cusp forms over $\Q$, one can show  that the Fourier coefficients  change signs infinitely often (cf.~\cite{Mur83}).
For normalized cuspidal eigenforms of integral weight without complex multiplication (CM), the equidistribution of signs is a consequence 
of the Sato-Tate equidistribution theorem due to Barnet-Lamb, Geraghty, Harris, and Taylor (cf.~\cite{BGHT11}).
Recently, study on sign change has been extended to cusp forms of half-integral weight over $\Q$ (cf.~\cite{Koh07}).
In~\cite{BK08}, Bruinier and Kohnen conjectured an equidistribution of signs for half-integral weight modular forms over $\Q$ (cf.~\cite{KLW13} for more details). 
In~\cite{IW13, IW16}, Inam and Wiese showed the equidistribution of signs for certain subfamilies of coefficients that are accessible via the Shimura correspondence.


It is natural to ask similar questions for modular forms defined over number fields, in particular, over totally real number fields, say $F$.
There are not many results available in this setting as compared to the classical case (over $\Q$). 
For Hilbert cusp forms of integral weight, one can show that the Fourier coefficients  change signs infinitely often (cf.~\cite{MT14}).
Furthermore,  
the equidistribution of signs for primitive Hilbert forms of integral weight without CM can be obtained as, similar to the case of classical forms, a consequence of the Sato-Tate equidistribution theorem due to Barnet-Lamb, Gee, and Geraghty \cite{BGG11} 
(cf. Theorem~\ref{equidistribution} in the text). To the best of authors knowledge, similar results are not available in the literature for Hilbert modular forms of half-integral weight. 

This article is a modest attempt to show that the ideas of Inam and Wiese in~\cite{IW13} generalize to the case of Hilbert modular cusp forms of half-integral weight. 
We show that the equidistribution of signs holds for certain subfamilies of coefficients that are accessible via the Shimura correspondence. The proof uses
the Sato-Tate equidistribution theorem for non-CM primitive Hilbert modular forms. As a consequence, we see that the Fourier coefficients  change signs infinitely often.

The article is organized as follows. In \S\ref{sec:preliminaries}, we review the basic definitions of Hilbert modular forms of integral, half-integral weight
and state the Shimura correspondence between them. In \S\ref{sec:Sato-Tate}, we recall the Sato-Tate equidistribution Theorem for
automorphic representations of $\GL_2(\mathbb{A}_F)$.  
In \S\ref{sec:main}, the main result of this article is stated as Theorem~\ref{Sato-Tate-signchanges} with its proof immediately after. 

\section{Preliminaries}\label{sec:preliminaries}
In this section, we recall important definitions and properties of holomorphic Hilbert modular forms of both integral and half-integral weight. 
We adopt the setting from Shimura~\cite{Shi87}. For the case of integral weight, the reader may also refer~\cite{RT11} and~\cite{Shi78}. 
Throughout the paper, let $F$ denote a totally real number field of degree $n$, $\mcO_F$ its ring of integers, and $\Dif_F$ the different ideal of $F$. 
We also fix the order of real embeddings $\{\eta_j\}_{j=1}^n$ of $F$ and denote as $\eta$. 
Then, an element $\alpha$ in $F$ sits inside $\R^n$ by $(\eta_1(\alpha),\dots,\eta_n(\alpha))$. 
We write it as $(\alpha_1,\dots,\alpha_n)$,  $\eta(\alpha)$, or simply $\alpha$ again when no confusion arises.
An element $\alpha\in F$ is said to be totally positive if $\alpha_j>0$ for all $j$, and for any subset $X\subset F$, we denote by $X^+$ the set of totally positive elements in $X$.

\subsection{Half-integral weight Hilbert modular forms}
\label{upperhalf-plane-version}
Let $k=(k_1,\dots,k_n)$ be an integral or a half-integral weight, i.e., $k_j\in\Z_{>0}$ for all $j$ or $k_j=1/2+m_j$ with $m_j\in\Z_{>0}$ for all $j$, respectively. Both cases together, we denote $k=u/2+m$ while it is understood that $u\in\{0, 1\}$ and $m=(m_1,\dots, m_n)\in\Z_{>0}^n$. Given a holomorphic function $g$ on $\h^n$ and an element $\dis \gamma=(\gamma_1,\dots,\gamma_n)$ of $\SL_2(\R)^n$ with $\dis \gamma_j=\left(\begin{matrix} a_j & b_j \\ c_j & d_j \end{matrix}\right)$, define 
\begin{equation}\label{eqn:k_stroke}
 g|\!|_k \gamma (z)=h(\gamma, z)^{-u}\prod_j (c_jz_j+d_j)^{-m_j} g(\gamma z)
\end{equation}
where $z\in \h^n$ 
and $h(\gamma, z)$ is some non-vanishing holomorphic function on $\h^n$. See \cite[Proposition 2.3]{Shi87} for the precise definition for $h$. We note that the function $h$ is only defined when $\gamma$ is in a ``nice" subgroup of $\SL_2(\R)$, but we shall not worry about the details as we only consider such a congruence subgroup $\Gamma$ of $\SL_2(\R)$ as in (\ref{eqn:cong}). 
We refer the reader to Shimura \cite[Section 2]{Shi87} for the details.

For the rest of this section, we assume $k$ is half-integral. Let $\mfc$ be an integral ideal of $F$ that is divisible by $4$, and define a congruence subgroup $\Gamma=\Gamma(\mfc)$ of $\SL_2(\R)$ by
\begin{equation}\label{eqn:cong}
 \Gamma(\mfc)=\left\{ \left(\begin{matrix} a & b \\ c & d \end{matrix}\right)\in \SL_2(\R)\, :\, \begin{matrix} a\in\mcO_F, & b\in 2\Dif_F^{-1}  \\ c\in 2^{-1}\mfc\Dif_F, & d\in \mcO_F \end{matrix} \right\}.
\end{equation}
Let us take a Hecke character $\psi$ on the idele group $\A_F^\times$ of $F$ whose conductor divides $\mfc$ and infinite part $\psi_\infty=\prod_j \psi_{\eta_j}$ satisfies the following condition;
\begin{equation}\label{eq:psi}
\psi_\infty(-1)=(-1)^{\sum_j m_j}.
\end{equation}
Such a character $\psi$ can be extended to a character of $\Gamma(\mfc)$, which is again denoted by $\psi$, as $\psi(\gamma)=\psi(a)$ where $\dis \gamma =\left(\begin{matrix} a & b \\ c & d \end{matrix}\right)$. 
We denote by $M_k(\Gamma(\mfc),\psi)$ the set of all holomorphic functions $g$ on $\h^n$ satisfying
\[ g|\!|_k\gamma=\psi_\mfc(\gamma)g\]
for all $\gamma\in \Gamma(\mfc)$, where $\psi_\mfc$ is the ``$\mfc$-part" of $\psi$, i.e., $\psi_\mfc=\prod_{\mfp|\mfc}\psi_\mfp$. 
It should be noted that our choice of Hecke character $\psi$ only depends on finitely many places, namely at archimedean places 
and the $\mfc$-part. However, we will keep the notation $M_k(\Gamma(\mfc),\psi)$ without replacing $\psi$ with $\psi_\mfc$, as the choice of characters become more crucial in \S\ref{sec:main}.

Such a form $g \in M_k(\Gamma(\mfc),\psi)$ is known to have the Fourier expansion corresponding to any given fractional ideal $\mfa$. Its coefficients are denoted by $\{\lambda_{g}(\xi, \mfa)\}_{\xi, \mfa}$ where $\xi$ varies over totally positive elements in  $F$. One can treat $\{\lambda_{g}(\xi,\mfa)\}$ as a two parameter family of Fourier coefficients for $g$, as varies 
over $\xi$ in $F$ and fractional ideals $\mfa$. It should be noted that $\lambda_{g}(\xi,\mfa)=0$ unless $\xi\in (\mfa^{-2})^+$ or $\xi=0$. 
A modular form $g$ is said to be a cusp form if $\lambda_{g|_{\gamma}}(0,\mfa) = 0$ 
for every fractional ideal $\mfa$ and every $\gamma \in \SL_2(F)$. The space of such $g$ is denoted by $S_k(\Gamma(\mfc),\psi)$. 
For more details, we refer the reader to~\cite[Proposition 3.1]{Shi87}.

Our aim in this article is to study the equidistribution of signs for a family of Fourier coefficients $\{ \lambda_g(\xi,\mfa) \}$
with $\mfa$ varying over a certain family of fractional ideals.


\subsection{Integral weight Hilbert modular forms} This section is essentially a summary of Raghuram and Tanabe \cite[Section 4.1]{RT11} with some modifications following Shimura \cite[Section 6]{Shi87}.  

We assume that $k=(k_1,\dots, k_n)\in \Z_{>0}^n$ throughout this section. For a non-archimedean place $\mfp$ of $F$, 
let $F_\mfp$ be a completion of $F$. Let $\mfa$ and $\mfb$ be integral ideals of $F$, and define a subgroup $K_\mfp(\mfa, \mfb)$ of $\GL_2(F_\mfp)$ as
\[ K_\mfp(\mfa, \mfb)=\left\{\left(\begin{matrix} a & b \\ c & d \end{matrix}\right)\in \GL_2(F_\mfp)\, : \, \begin{matrix} a\in \mcO_\mfp, & b\in \mfa_\mfp^{-1}\Dif_\mfp^{-1}, & \\ c\in \mfb_\mfp\Dif_\mfp, & d\in\mcO_\mfp, & |ad-bc|_\mfp=1\end{matrix}\right\}\]
where the subscript $\mfp$ means the $\mfp$-parts of given ideals. Furthermore, we put 
\[ K_0(\mfa, \mfb)=\SO(2)^n\cdot\prod_{\mfp<\infty}K_\mfp(\mfa, \mfb) \quad \text{and} \quad W(\mfa, \mfb)=\GL_2^+(\R)^nK_0(\mfa, \mfb).\]
In particular, if $\mfa=\mcO_F$, we simply write $K_\mfp(\mfb)=K_\mfp(\mcO_F, \mfb)$ and $W(\mfb)=W(\mcO_F, \mfb)$.
Then, we have the following disjoint decomposition of $\GL_2(\A_F)$,
\begin{equation}\label{eqn:decomp}
 \GL_2(\A_F)=\cup_{\nu=1}^h\GL_2(F)x_\nu^{-\iota} W(\mfb),
\end{equation}
where $\dis x_\nu^{-\iota} =\left(\begin{matrix} t_\nu^{-1} & \\ & 1\end{matrix}\right)$ with $\{t_\nu\}_{\nu=1}^h$ taken to be a complete set of representatives of the narrow class group of $F$. We note that such $t_\nu$ can be chosen so that the infinity part $t_{\nu, \infty}$ is $1$ for all $\nu$. For each $\nu$, we also put
\begin{align*}
\Gamma_\nu(\mfa, \mfb) &= \GL_2(F)\cap x_\nu W(\mfa, \mfb)x_\nu^{-1} \\ 
&= \left\{ \pmat{a}{t_\nu^{-1}b}{t_\nu c}{d}\in\GL_2(F): \, \begin{matrix} a\in \mcO_F, & b\in\mfa^{-1}\Dif_F^{-1}, & \\ c\in \mfb\Dif_F, & d\in\mcO_F, & ad-bc\in \mcO_F\end{matrix}\right\}. 
\end{align*} 
It is understood that $\Gamma_\nu(\mfb)=\Gamma_\nu(\mcO_F, \mfb)$ as before. 

Let $\psi$ be a Hecke character of $\A_F^\times$ such that its conductor divides $\mfb$ and its infinite part $\psi_\infty$ is of the form
\[ \psi_\infty(x)={\rm sgn}(x_\infty)^k|x_\infty|^{i\mu}\]
where $\mu=(\mu_1,\dots, \mu_n)\in\R^n$ with $\sum_{j=1}^n \mu_j=0$. We let $M_k(\Gamma_\nu(\mfb), \psi_\mfb, \mu)$ denote the space of all functions $f_\nu$ that are holomorphic on $\h^n$ and 
at cusps, satisfying
\[ f_\nu |\!|_k \gamma=\psi_\mfb(\gamma)\det \gamma^{i\mu/2}f_\nu \]
for all $\gamma$ in $\Gamma_\nu(\mfb)$. We note that such a function $f_\nu$ affords a Fourier expansion, and its coefficients are denoted as $\{a_\nu(\xi)\}_\xi$
where $\xi$ runs over all the totally positive elements in $t_\nu^{-1}\mcO_F$ and $\xi=0$. 
Similar to the case of half-integral weight forms, a Hilbert modular form is called a cusp form if, for all $\gamma \in \GL^+_2(F)$, the constant term of $f|\!|_k\gamma$
in its Fourier expansion is $0$, and the space of cusp forms with respect to $\Gamma_{\nu}(\mfb)$ is denoted by $S_k(\Gamma_\nu(\mfb), \psi_\mfb, \mu)$.

Now, put $\mathbf{f}=(f_1,\dots,f_h)$ where $f_\nu$ belongs to $M_k(\Gamma_\nu(\mfb), \psi_\mfb, \mu)$ for each $\nu$, and define $\mathbf{f}$ to be a function on $\GL_2(\A_F)$ as
\begin{equation}\label{eq:HMF}
\mathbf{f}(g)=\mathbf{f}(\gamma x_\nu^{-\iota}w)\coloneqq\psi_\mfb(w^\iota)\det w_\infty^{i\mu/2}(f_\nu||_k w_\infty)(i\!\! i)\end{equation}
where $\gamma x_\nu^{-\iota}w\in\GL_2(F)x_\nu^{-\iota}W(\mfb)$ as in (\ref{eqn:decomp}), $w^\iota=\omega_0(^t w)\omega_0^{-1}$ with $\dis \omega_0=\left(\begin{matrix} & 1 \\ -1 & \end{matrix}\right)$, and $i=(i,\dots, i)$. 
The space of such $\mathbf{f}$ is denoted as 
\[ \mathfrak{M}_k(\psi_\mfb, \mu)=\prod_\nu M_k(\Gamma_\nu(\mfb), \psi_\mfb, \mu).\] 
Furthermore, the space consisting of all  $\mathbf{f}=(f_1,\dots,f_h)\in \mathfrak{M}_k(\psi_\mfb, \mu)$ satisfying
\[ \mathbf{f}(xg)=\psi(x)\mathbf{f}(g) \quad \text{for any}\, x\in \A_F^\times \quad \text{and}\quad g\in\GL_2(\A_F)\]
is denoted as $\mathfrak{M}_k(\mfb,\psi)$.  In particular, if each $f_\nu$ belongs to $S_k(\Gamma_\nu(\mfb), \psi_\mfb, \mu)$, then the space of such $\mathbf{f}$ is denoted by $\mathfrak{S}_k(\mfb,\psi)$.
A cusp form $\mathbf{f}$ is called primitive if it is a normalized new form and a common eigenfunction of all Hecke operators.

Let $\mfm$ be an ideal of $F$ and write $\mfm=\xi t_\nu^{-1}\mcO_F$ with a totally positive element $\xi$ in $F$. Then the Fourier coefficient of $\mathbf{f}$ at $\mfm$ is defined as 
\begin{equation}\label{eqn:coeff}
c(\mfm,\mathbf{f})=\begin{cases} a_\nu(\xi)\xi^{-(k+i\mu)/2} \quad & \text{if}\quad \mfm=\xi t_\nu^{-1}\mcO_F\subset \mcO_F \\
0 & \text{if} \quad \mfm \,\, \text{is not integral}. \end{cases}
\end{equation}






\subsection{Shimura Correspondence}
 In this section, we shall recall the Shimura correspondence, which states that, 
given a non-zero half-integral weight cusp form, there is an integral automorphic form associated with it.

For any integral ideal $\mfa$ in $\mcO_F$, we introduce a formal symbol ${\rm M}(\mfa)$ satisfying that ${\rm M}(\mcO_F)=1$ and ${\rm M}(\mfa \mfb) = {\rm M}(\mfa){\rm M}(\mfb)$ for all $\mfa,\mfb \subseteq \mcO_F$.
Then one can consider the ring of formal series in these symbols, indexed by integral ideals.

The following result is the Shimura correspondence for Hilbert modular forms~\cite[Theorems 6.1 and 6.2]{Shi87}. 
  We assume for simplicity that $\psi$ is a quadratic character, as it will be the case in our setting.
 \begin{thm}\label{Shimuralift}
 Let $ 0 \neq g\in S_{k}(\Gamma(\mfc),\psi)$ with a half-integral weight $k=\frac{1}{2}+m$  with $m \geq 1$, an integral ideal $\mfc$ of $F$ divisible by $4$,
 and $\psi$ being a Hecke character of $F$ such that it satisfies \eqref{eq:psi} and its conductor divides $\mfc$. Further, we assume that $\psi_\infty(x)=|x|^{i\mu}$ for any totally positive element 
 $x$ in $\A_{F,\infty}^\times$ with some $\mu\in\R^n$ such that $\sum_j\mu_j=0$. 

 Let $\tau$ be an arbitrary element in $\mcO_F^+$, and 
 write $\tau\mcO_F=\mfa^2\mfr$ for some integral ideal $\mfa$ and a square free integral ideal $\mfr$. Then the following assertions hold.
\begin{enumerate}
\item \label{one}
      Let $\mfb$ be a fractional ideal of $F$ and define $\Gamma=\GL_2(F)\cap W(\mfb,2^{-1}\mfc\mfb)$. Then there exists $f\in M_{2m}(\Gamma,\psi_{2^{-1}\mfc}^2,\mu)$ so that 
      \begin{equation}\label{first}
       \sum_{\xi\in\mfb/\mcO_{F}^{\times, +}} a(\xi)\xi^{-m-i\mu}{\rm M}(\xi\mfb^{-1})=\sum_{\mfm\subseteq \mcO_F} \lambda_g(\tau,\mfa^{-1}\mfm){\rm M}(\mfm)
	\sum_{\mfn\mfm\mfb \sim 1}(\psi{\epsilon_\tau})^*(\mfn){\rm N}(\mfn)^{-1}{\rm M}(\mfn),
      \end{equation} 
       where $\lambda_g(\tau,\mfa^{-1}\mfm)$ is the Fourier coefficient of $g$ at a cusp corresponding to 
       $\mfa^{-1}\mfm$, $\psi_{ \{ 2^{-1}\mfc \} }= \prod_{\mfp|2^{-1} \mfc }\psi_\mfp$, $\epsilon_\tau$ is the Hecke character of $F$
       corresponding to $F(\sqrt{\tau})/F$, and $(\psi{\epsilon_\tau})^*$ is the character induced from $\psi{\epsilon_\tau}$. In the second sum of the right hand side in (\ref{first}), $\mfn$ runs through all the integral ideal of $F$ that are prime to $\mfc\mfr$ and equivalent to $(\mfm\mfb)^{-1}$. 
 		
 \item \label{two} Let $\mathbf{f}_\tau=(f_1,\dots,f_h)$ where $f_\nu$ is of the form given in~\eqref{one} with $\mfb=t_\nu\mcO_F$ for all $\nu=1,\dots,h$. 
 Then $\mathbf{f}_\tau\in \mathfrak{M}_{2m}(2^{-1}\mfc,\psi^2)$ and it satisfies
 	\begin{equation} \label{eqn:adelized}
 	\sum_\mfm c(\mfm,\mathbf{f}_\tau){\rm M}(\mfm)
 	=\sum_{\mfm}\lambda_g(\tau,\mfa^{-1}\mfm){\rm M}(\mfm)\prod_{\mfp\nmid\mfc\mfr}\left(1-\frac{(\psi{\epsilon_\tau})^*(\mfp)} {{\rm N}(\mfp)} {\rm M}(\mfp)\right)^{-1}
 	\end{equation}
        where  $\mfm$ runs over all the integral ideals of $F$ and $\mfp$ over all the prime ideals which do not divide $\mfc\mfr$.
        
 \item The function $f$ given in {\rm (1)} is a cusp form if $m_j > 1$ for some $j$. 
 		
\end{enumerate}
 \end{thm}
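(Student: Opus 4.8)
The plan is to deduce cuspidality of $f$ from the vanishing of its Eisenstein component, the key numerical input being that, since $m_j\ge 1$ for every $j$, the hypothesis ``$m_j>1$ for some $j$'' is equivalent to the inequality $\sum_j m_j>n$. A holomorphic Hilbert modular form of weight $2m$ is a cusp form exactly when its constant terms vanish at every cusp, so I would write $f=f_0+E$ with $f_0$ cuspidal and $E$ an Eisenstein series and show $E=0$. Since \eqref{first} records only the coefficients $a(\xi)$ at totally positive $\xi$ and says nothing about the constant term $\xi=0$, the argument must be indirect: I will compare the size of the coefficients produced by \eqref{first} with the size that a nonzero $E$ would force.

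First I would bound the coefficients of $f$. Because $g$ is a half-integral weight cusp form, the Hecke bound controls $\lambda_g(\tau,\mfa^{-1}\mfm)$, while the inner sum over $\mfn$ in \eqref{first} is the Hecke $L$-series of $(\psi\epsilon_\tau)^*$ carrying an extra factor $\mathrm{N}(\mfn)^{-1}$, hence converges absolutely and contributes only $\mathrm{N}(\cdot)^{\epsilon}$. Carrying out the Dirichlet convolution in \eqref{first}, I expect the normalized coefficients of $f$ to grow no faster than $\prod_j\xi_j^{m_j+\epsilon}$, mirroring the classical estimate $A(n)\ll n^{\lambda+\epsilon}$ for the Shimura lift over $\Q$. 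A nonzero holomorphic Eisenstein series of weight $2m$, by contrast, has Fourier coefficients of exact order $\prod_j\xi_j^{2m_j-1}$. If $E\neq 0$, the coefficients of $f$ would have to attain this order along a suitable sequence of $\xi$, forcing $\sum_j m_j\ge\sum_j(2m_j-1)$, i.e.\ $\sum_j m_j\le n$; combined with $m_j\ge 1$ this yields $m_j=1$ for all $j$. Consequently, as soon as some $m_j>1$ we must have $E=0$, so $f\in S_{2m}(\Gamma,\psi_{2^{-1}\mfc}^2,\mu)$, and the corresponding assertion for $\mathbf{f}_\tau$ in part~(2) follows componentwise.

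The step I expect to be the main obstacle is making this growth comparison rigorous over $F$: one must pin down the correct Hecke bound for Fourier coefficients of half-integral weight Hilbert cusp forms, control how $\lambda_g(\tau,\mfa^{-1}\mfm)$ depends on the varying ideal $\mfm$ and not merely on the archimedean size of $\xi$, and manage the bookkeeping with totally positive units and the narrow class group, which makes ``growth'' delicate for the non-parallel weight $2m$. An equivalent route that sidesteps part of this is analytic: from \eqref{eqn:adelized} one obtains the factorization $L(s,\mathbf{f}_\tau)=D_g(s)\,L\!\left(s+1,(\psi\epsilon_\tau)^*\right)$ with $D_g(s)=\sum_\mfm \lambda_g(\tau,\mfa^{-1}\mfm)\,\mathrm{N}(\mfm)^{-s}$, and cuspidality becomes the statement that $L(s,\mathbf{f}_\tau)$ is holomorphic to the right of the Eisenstein abscissa, which once more reduces to $\sum_j m_j>n$.
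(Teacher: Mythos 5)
Your proposal addresses only assertion~(3) of Theorem~\ref{Shimuralift}, and that is the central gap: assertions~(1) and~(2) --- the existence of a holomorphic form $f$ of weight $2m$ on $\Gamma=\GL_2(F)\cap W(\mfb,2^{-1}\mfc\mfb)$ with character $\psi_{2^{-1}\mfc}^2$ whose coefficients satisfy the convolution identity \eqref{first}, and its adelic packaging \eqref{eqn:adelized} --- are taken as given throughout your argument. But those two assertions \emph{are} the Shimura correspondence; they are where essentially all the work lies. Nothing in the proposal constructs $f$, shows that the Dirichlet-convolution coefficients on the right of \eqref{first} are the Fourier coefficients of a holomorphic modular form of the stated weight, level, and character, or derives \eqref{eqn:adelized}; without that there is no object whose cuspidality can even be discussed. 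Note also that the paper itself does not prove this theorem: it is quoted from Shimura \cite[Theorems 6.1 and 6.2]{Shi87}, where the construction is carried out. So the ``proof'' the paper supplies is a citation, and a genuinely blind proof would have to reconstruct Shimura's construction, not only the cuspidality criterion.

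Even granting (1) and (2), the argument you sketch for (3) is not correct as stated over a totally real field of degree $n>1$. The dichotomy you invoke --- a nonzero Eisenstein component forces coefficients of exact order $\prod_j \xi_j^{2m_j-1}$ --- is meaningful only for \emph{parallel} weight: when the weight $2m$ is non-parallel, there are no nonzero Eisenstein series at all (invariance of the constant terms under the totally positive units forces them to vanish unless the weight is parallel), so every holomorphic form is automatically cuspidal and no growth comparison is needed. Your reduction to the inequality $\sum_j m_j>n$ conflates these two cases; the genuine content of (3) is the parallel case $m_1=\dots=m_n=m_0\geq 2$, where a growth or $L$-function-pole comparison can indeed work, but there the inputs you would need --- a Hecke-type bound for the two-parameter family $\lambda_g(\tau,\mfa^{-1}\mfm)$ as a function of ${\rm N}(\mfm)$ rather than of the archimedean size of $\xi$, uniformly across the narrow class group --- are exactly the steps you flag as unresolved. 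As it stands, the proposal is a plausible sketch of one third of the theorem, with the remaining two thirds (and the paper's actual source for all three parts) missing.
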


\section{Sato-Tate equidistribution theorem for Hilbert Modular forms}
\label{sec:Sato-Tate}
We now recall the Sato-Tate equidistribution theorem for Hilbert modular forms of integral weight without CM. 
Let $\mathbb{P}$ denote the set of all prime ideals of $\mcO_F$. We begin by recalling the notion of natural density for a subset of $\mathbb{P}$.

\begin{dfn}
	Let $F$ be a number field and $S$ be a subset of $\mathbb{P}$. We define the natural density of $S$ to be 
	\begin{equation}
	d(S) = \underset{x \ra \infty}{\mathrm{lim}}\  
	\frac{\# \{ \mfp  : {\rm N}(\mfp) \leq x, \mfp \in S \}}{\# \{   \mfp : {\rm N}(\mfp) \leq x, \mfp \in \mathbb{P} \}},
	\end{equation}
	provided the limit exists. 
\end{dfn}

In~\cite{BGG11}, 
Barnet-Lamb, Gee, and Geraghty proved the following Sato-Tate equidistribution theorem.
\begin{thm}{\rm (Barnet-Lamb, Gee, Geraghty, \cite[Corollary 7.1.7]{BGG11})}
\label{integralsato-tate}
Let $F$ be a totally real number field of degree $n$ and $\Pi$ a non-CM regular algebraic cuspidal automorphic representation of $\GL_2(\A_F)$. 
Write $\mu=(\mu_1,\dots,\mu_n)$ for an integral weight for the diagonal torus of $\GL_2(\R)^n$ with $\mu_j=(a_j, b_j)$ and $a_j\geq b_j$ for all $j$. We note that the values $a_j+b_j$ are the same for all $j$, and therefore we may put $\omega_\Pi=a_j+b_j$.
Let $\chi$ be the product of the central character of $\Pi$ with $|\cdot|^{\omega_{\pi}}$, so that $\chi$ is a finite order character. 
Let $\zeta$ be a root of unity such that $\zeta^2$ is in the image of $\chi$. 
For any place $\mfp$ of $F$ such that $\Pi_{\mfp}$ is unramified, let $\lambda_\mfp$ denote the eigenvalue of the Hecke operator
\begin{equation}\label{Heckeoperator}
 \GL_2(\mcO_\mfp)\left(\begin{matrix} \varpi_\mfp & \\ & 1 \end{matrix}\right) \GL_2(\mcO_\mfp)
\end{equation}
on $\Pi_{\mfp}^{\GL_2(\mcO_{\mfp})}$, where $\varpi_\mfp$ is a uniformizer of $\mcO_\mfp$.

Then as $\mfp$ ranges over the unramified places of $F$ 
such that $\chi_\mfp(\varpi_\mfp)=\zeta^2$, the number given by
\[ \frac{\lambda_\mfp}{2{\rm N}(\mfp)^{(1+\omega_\Pi)/2}\zeta}\]
belongs to $[-1,1]$, and furthermore they are equidistributed in $[-1,1]$ with respect to the measure $(2/\pi)\sqrt{1-t^2}dt$.
\end{thm}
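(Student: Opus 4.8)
The plan is to deduce the equidistribution from the analytic properties of the symmetric power $L$-functions attached to $\Pi$, via the Weyl equidistribution criterion. First I would recall that the Sato--Tate measure $(2/\pi)\sqrt{1-t^2}\,dt$ on $[-1,1]$ is the pushforward, under the trace map, of the Haar probability measure on $\mathrm{SU}(2)$, and that the irreducible characters of $\mathrm{SU}(2)$ restrict on the maximal torus to the Chebyshev polynomials $U_m$. After setting $x_\mfp = \lambda_\mfp/(2\,\mathrm{N}(\mfp)^{(1+\omega_\Pi)/2}\zeta)$ and checking, from the purity (Ramanujan) bound for the Galois representation attached to $\Pi$, that each $x_\mfp \in [-1,1]$, the asserted equidistribution becomes equivalent to the vanishing, for every $m \geq 1$, of the normalized Weyl sums $\frac{1}{\#\{\mfp:\mathrm{N}(\mfp)\le x\}}\sum_{\mathrm{N}(\mfp)\le x} U_m(x_\mfp)$ as $x\to\infty$, where $\mfp$ ranges over the unramified places with $\chi_\mfp(\varpi_\mfp)=\zeta^2$.

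The second step is to relate these Weyl sums to $L$-functions. A standard Tauberian argument (as in Serre's treatment of Sato--Tate) shows that the sums $\sum U_m(x_\mfp)$ have the expected mean provided that, for each $m\ge 1$, the suitably twisted symmetric power $L$-function associated with $\mathrm{Sym}^m\Pi$ extends to a function that is holomorphic and non-vanishing on the line $\Re(s)=1$, with no pole there; the restriction to places with $\chi_\mfp(\varpi_\mfp)=\zeta^2$ is incorporated by introducing an auxiliary Dirichlet series and decomposing along the finite image of $\chi$. The normalization by $\zeta$ and $\mathrm{N}(\mfp)^{(1+\omega_\Pi)/2}$ is precisely what centres these $L$-functions so that the relevant edge of the critical strip sits at $\Re(s)=1$.

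The heart of the matter is establishing these analytic properties, which I would obtain from potential automorphy. To $\Pi$ one attaches a compatible system of $\ell$-adic Galois representations $\rho_{\Pi,\ell}\colon \mathrm{Gal}(\overline{F}/F)\to \GL_2(\overline{\Q}_\ell)$; the symmetric powers $\mathrm{Sym}^m \rho_{\Pi,\ell}$ again form a compatible system, pure of the expected weight. Invoking the potential automorphy theorems of Barnet-Lamb--Gee--Geraghty, each such system becomes automorphic after base change to a suitable totally real Galois extension $F'/F$. A Brauer-theoretic argument then expresses the trivial representation of $\mathrm{Gal}(F'/F)$ as an integral combination of characters induced from solvable subgroups, which upgrades automorphy over $F'$ to meromorphic continuation and a functional equation for $L(s,\mathrm{Sym}^m\Pi)$ over $F$; the non-vanishing on $\Re(s)=1$ then follows from the standard non-vanishing of automorphic $L$-functions at the edge of the critical strip.

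The main obstacle is precisely this potential automorphy input. It rests on automorphy lifting theorems over totally real fields, proved by the Taylor--Wiles--Kisin patching method with the refinements of Ihara avoidance and carefully chosen local deformation conditions, together with the construction --- via a Moret-Bailly type argument --- of auxiliary motives over a totally real extension whose associated residual representation is already known to be automorphic and thereby furnishes a starting point for the lifting. Once this deep machinery is granted, the passage from the analytic properties of the symmetric power $L$-functions to equidistribution via the Weyl criterion is comparatively \emph{formal}.
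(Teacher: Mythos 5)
This theorem is not proved in the paper at all: it is imported as a black box, with the attribution and citation to \cite[Corollary 7.1.7]{BGG11} serving in place of a proof, and the paper's own work begins only downstream of it (Lemma~\ref{lem:relation} and Theorem~\ref{equidistribution} merely translate the statement into the normalization $B(\mfp)$ needed for the half-integral weight application). So there is no internal argument to compare yours against; the only meaningful comparison is with the proof in the cited source, and measured against that, your outline is essentially faithful. Barnet-Lamb--Gee--Geraghty do proceed exactly as you describe: attach the compatible system of $\ell$-adic representations to $\Pi$, prove potential automorphy of all symmetric powers over totally real Galois extensions (Taylor--Wiles--Kisin patching, Ihara avoidance, and a Moret-Bailly argument to produce the auxiliary starting point), use Brauer induction to get meromorphic continuation of the partial symmetric power $L$-functions over $F$, invoke Jacquet--Shalika type non-vanishing at the edge of the critical strip, and conclude by the Weyl criterion with the Chebyshev polynomials $U_m$ (equivalently, Serre's Tauberian argument) as test functions.

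Two places where your sketch is thinner than what the argument actually requires. First, the restriction to primes with $\chi_\mfp(\varpi_\mfp)=\zeta^2$ is handled by character orthogonality on the finite group ${\rm Im}(\chi)$, which means one needs the analytic properties (continuation, no pole, non-vanishing on $\Re(s)=1$) for the whole family of twists ${\rm Sym}^m\Pi\otimes\eta$ with $\eta$ running over characters of that finite group, not for a single auxiliary Dirichlet series; this is why the statement is phrased with $\zeta$ at all. Second, Brauer's theorem expresses each symmetric power $L$-function as a product of automorphic $L$-functions raised to possibly \emph{negative} integer powers, so holomorphy and non-vanishing of the resulting ratio on the edge line require both the non-vanishing theorem and the absence of poles for every constituent; this is standard but is precisely where the word ``formal'' in your last paragraph conceals real content. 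Neither point undermines your plan --- it is a correct roadmap --- but you should present it as a summary of the proof in \cite{BGG11}, which is how the paper itself treats the result, rather than as an independent argument.
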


For convenience, we now rewrite the above theorem in terms of 
the Fourier coefficients of primitive Hilbert modular forms.
From now on, we assume that $\mathbf{f}$ is a primitive form of weight $k=(k_1, \dots, k_n)$ with $k_1\equiv \dots \equiv k_n \equiv 0 \pmod{2}$ and each $k_j \geq 2$, level $\mfc$, 
and with trivial nebentypus. We first need the following lemma:

\begin{lem}  \label{lem:relation}
Let $\mathbf{f} \in \mathfrak{S}_k(\mfc,\1)$ be a primitive form, that is, a normalized common eigenform in the new space, and $\Pi=\Pi_\mathbf{f}$ an irreducible cuspidal automorphic representation corresponding to $\mathbf{f}$.
Let $\mfp$ be an prime ideal of $F$ such that $\mfp \nmid \mfc\Dif_{F}$.  
Let $c(\mfp,\mathbf{f})$ be the Fourier coefficients at $\mfp$ defined as in ~\eqref{eqn:coeff},
and $\lambda_{\mfp}$ be the eigenvalue of the Hecke operator defined as in ~\eqref{Heckeoperator}, then 
$$\lambda_\mfp=c(\mfp,\mathbf{f}){\rm N}(\mfp).$$ 
\end{lem}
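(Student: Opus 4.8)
The plan is to read both quantities off the action of the unramified Hecke algebra at $\mfp$ and then to compare normalizations. Since $\mfp\nmid\mfc\Dif_F$, the local representation $\Pi_\mfp$ is unramified and $\mathbf{f}$ is spherical at $\mfp$, so the double coset operator $T_\mfp$ in \eqref{Heckeoperator} acts on $\mathbf{f}$ by the scalar $\lambda_\mfp$. First I would fix the right coset decomposition
\[
\GL_2(\mcO_\mfp)\psmat{\varpi_\mfp}{}{}{1}\GL_2(\mcO_\mfp)
=\psmat{\varpi_\mfp}{0}{0}{1}\GL_2(\mcO_\mfp)\ \sqcup\ \bigsqcup_{b\in\mcO_\mfp/\varpi_\mfp\mcO_\mfp}\psmat{1}{b}{0}{\varpi_\mfp}\GL_2(\mcO_\mfp),
\]
a disjoint union of ${\rm N}(\mfp)+1$ cosets, so that $\lambda_\mfp\,\mathbf{f}(g)=\mathbf{f}(g\psmat{\varpi_\mfp}{0}{0}{1})+\sum_b\mathbf{f}(g\psmat{1}{b}{0}{\varpi_\mfp})$ for every $g\in\GL_2(\A_F)$.

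Next I would turn this finite place identity into a statement about Fourier coefficients, feeding the translates into the adelic description \eqref{eq:HMF} and using the decomposition \eqref{eqn:decomp}. Evaluating $\mathbf{f}$ on the translates $g\,\psmat{\varpi_\mfp}{0}{0}{1}$ and $g\,\psmat{1}{b}{0}{\varpi_\mfp}$ reduces to evaluating the component forms $f_\nu$ under the archimedean weight $k$ action \eqref{eqn:k_stroke}, which recovers the classical Hecke operator at $\mfp$ acting on the tuple $(f_1,\dots,f_h)$. Comparing Fourier expansions yields the standard relation expressing the $\mfm$-th coefficient of $T_\mfp\mathbf{f}$ through the coefficients at $\mfm\mfp$ and $\mfm\mfp^{-1}$; evaluating the eigenvalue identity $\lambda_\mfp\mathbf{f}=T_\mfp\mathbf{f}$ at $\mfm=\mcO_F$ kills the $\mfp^{-1}$ term (its index is not integral) and leaves $\lambda_\mfp$ expressed through the single coefficient $a_\nu(\xi)$ with $\mfp=\xi t_\nu^{-1}\mcO_F$. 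Reading this off through \eqref{eqn:coeff}, and using $\mu=0$ since the nebentypus is trivial, gives $\lambda_\mfp=c(\mfp,\mathbf{f})\,{\rm N}(\mfp)$.

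The step I expect to be the main obstacle is the precise bookkeeping of the power of ${\rm N}(\mfp)$: one must confirm that the total factor is exactly ${\rm N}(\mfp)^1$ and not some other power. This forces one to track three scalings simultaneously, namely the twist $\xi^{-(k+i\mu)/2}$ built into \eqref{eqn:coeff}, the factors $\psi_\mfb(w^\iota)$ and $\det w_\infty^{i\mu/2}$ in \eqref{eq:HMF} (the latter trivial here since $\mu=0$), and the automorphy factor of \eqref{eqn:k_stroke} evaluated at the Hecke representatives. The single surviving power of ${\rm N}(\mfp)$ is precisely the gap between the arithmetic normalization of the Fourier coefficients in \eqref{eqn:coeff} and the unnormalized double coset operator \eqref{Heckeoperator}.

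A cleaner alternative that sidesteps the unfolding is to argue locally through Satake parameters: realizing $\Pi_\mfp$ as an unramified principal series with parameters $\{\alpha_\mfp,\beta_\mfp\}$, the operator \eqref{Heckeoperator} has spherical eigenvalue ${\rm N}(\mfp)^{1/2}(\alpha_\mfp+\beta_\mfp)$, whereas Shimura's Euler product for $\sum_\mfm c(\mfm,\mathbf{f})\,{\rm N}(\mfm)^{-s}$ expresses $c(\mfp,\mathbf{f})$ through the same parameters; matching the two against the conventions of \cite[\S 6]{Shi87} recovers the factor ${\rm N}(\mfp)$. The subtlety is the same as before, since $\mathbf{f}$ transforms under the non-unitary weight $k$ action and passing to the unitary normalization of $\Pi$ is exactly what reintroduces the relevant power of ${\rm N}(\mfp)$. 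Either way the content is a normalization comparison rather than a new structural input, so the argument should be short once the conventions of \eqref{eq:HMF} and \eqref{eqn:coeff} are made fully explicit.
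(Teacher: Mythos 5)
Your proposal is correct and is essentially the proof the paper itself defers to: the paper gives no argument beyond citing \cite[pp.~305--306]{RT11}, and what is carried out there is exactly your first route --- decompose the double coset \eqref{Heckeoperator} into ${\rm N}(\mfp)+1$ right cosets, unfold against the adelic definition \eqref{eq:HMF}, read off the $\mcO_F$-th coefficient of $T_\mfp\mathbf{f}$ using $c(\mcO_F,\mathbf{f})=1$ and the vanishing of coefficients at non-integral ideals, and track the normalizations of \eqref{eqn:coeff} to land on the single power ${\rm N}(\mfp)$. Your Satake-parameter alternative is an equally valid shortcut, but the double-coset computation is the one behind the citation.
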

\begin{proof}
The proof of this lemma can be found in~\cite[Page 305-306]{RT11}.
\end{proof}

To make the calculation simpler at a later point, we re-normalize $c(\mfp,\mathbf{f})$ as follows:
Define $$C(\mfp,\mathbf{f})=c(\mfp,\mathbf{f}){\rm N}(\mfp)^{k_0/2},$$
where 
$k_0=\max_j\{k_j\}$ with $k=(k_1, \dots, k_n)$ being the weight of $\mathbf{f}$.
Since we are interested in sign changes of the Fourier coefficients, this re-normalization  will not make any difference.
The following theorem is a consequence of Theorem~\ref{integralsato-tate} above:
\begin{thm}
\label{equidistribution}
Let $\mathbf{f}\in \mathfrak{S}_k(\mfc,\1)$ be a primitive form of weight $k=(k_1, \dots, k_n)$ such that $k_1\equiv \dots \equiv k_n \equiv 0 \pmod{2}$ and each $k_j \geq 2$.
Suppose that $\mathbf{f}$ does not have complex multiplication. Then, for any prime ideal $\mfp$ of $F$ such that $\mfp\nmid \mfc\Dif_F$, 
we have
$$ B(\mfp)\coloneqq\frac{C(\mfp,\mathbf{f})}{2 {\rm N}(\mfp)^{\frac{k_0-1}{2}}}\in[-1,1]. $$
Furthermore, $\{B(\mfp)\}_\mfp$ are equidistributed in $[-1,1]$ with respect to the measure $\mu=(2/\pi) \sqrt{1-t^2}dt$. 
In other words, for any subinterval $I$ of $[-1,1]$, we have
$$\underset{x \ra \infty}{{\lim}}\  
\frac{\# \{ \mfp\in \mathbb{P} : \mfp\nmid \mfc\Dif_F, {\rm N}(\mfp) \leq x, B(\mfp) \in I \}}{\# \{   \mfp\in \mathbb{P} : {\rm N}(\mfp) \leq x \}}
= \mu(I)= \frac{2}{\pi} \int_I \sqrt{1-t^2} dt,$$
i.e., the natural density of the set $\{ \mfp: B(\mfp) \in I\}$ is $\mu(I)$.
\end{thm}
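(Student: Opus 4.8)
The plan is to deduce Theorem~\ref{equidistribution} directly from Theorem~\ref{integralsato-tate} by carefully matching the normalizations. First I would set up the dictionary between the two languages. Under our hypotheses $\mathbf{f}$ has trivial nebentypus and parallel even weight $k=(k_1,\dots,k_n)$ with each $k_j\geq 2$, so the associated $\Pi=\Pi_\mathbf{f}$ is a non-CM regular algebraic cuspidal automorphic representation of $\GL_2(\A_F)$, and Theorem~\ref{integralsato-tate} applies. Because the central character of $\Pi$ is (a power of $|\cdot|$ times) the trivial character, the finite-order character $\chi$ is trivial, so one may take $\zeta=1$ and $\zeta^2=1$; consequently the condition $\chi_\mfp(\varpi_\mfp)=\zeta^2$ is vacuous and the equidistribution runs over \emph{all} unramified $\mfp$. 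I would record the relation between $\omega_\Pi$ and $k_0$: with trivial central character the weights satisfy $a_j+b_j=\omega_\Pi$ independent of $j$, and for parallel weight $k$ one has $\omega_\Pi=k_0-1$ (equivalently $1+\omega_\Pi=k_0$), which is the identity that makes the exponents in the two normalizations agree.

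Next I would chain the normalizations. Lemma~\ref{lem:relation} gives $\lambda_\mfp=c(\mfp,\mathbf{f}){\rm N}(\mfp)$ for $\mfp\nmid\mfc\Dif_F$, and by definition $C(\mfp,\mathbf{f})=c(\mfp,\mathbf{f}){\rm N}(\mfp)^{k_0/2}$. Substituting, the Sato--Tate quantity becomes
\[
\frac{\lambda_\mfp}{2{\rm N}(\mfp)^{(1+\omega_\Pi)/2}\zeta}
=\frac{c(\mfp,\mathbf{f}){\rm N}(\mfp)}{2{\rm N}(\mfp)^{k_0/2}}
=\frac{C(\mfp,\mathbf{f}){\rm N}(\mfp)^{1-k_0/2}}{2{\rm N}(\mfp)^{k_0/2}}\cdot{\rm N}(\mfp)^{k_0/2-k_0/2}
=\frac{C(\mfp,\mathbf{f})}{2{\rm N}(\mfp)^{(k_0-1)/2}}
=B(\mfp),
\]
using $1+\omega_\Pi=k_0$ and $\zeta=1$. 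Thus the re-normalized Fourier-coefficient quantity $B(\mfp)$ is \emph{literally} the quantity that Theorem~\ref{integralsato-tate} asserts to be equidistributed; in particular $B(\mfp)\in[-1,1]$. I would spell out this computation line by line since it is the technical heart, being careful that the exponent bookkeeping closes exactly.

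Finally I would transfer the equidistribution statement itself. Theorem~\ref{integralsato-tate} gives equidistribution of $\{B(\mfp)\}$ in $[-1,1]$ against $(2/\pi)\sqrt{1-t^2}\,dt$ as $\mfp$ ranges over unramified places. I would note that excluding the finitely many ramified primes (those dividing $\mfc\Dif_F$, together with any places where $\Pi$ ramifies) has natural density zero, so restricting to $\mfp\nmid\mfc\Dif_F$ changes neither the limiting ratio nor the denominator $\#\{\mfp:{\rm N}(\mfp)\leq x\}$. Reading off the definition of equidistribution for an arbitrary subinterval $I\subseteq[-1,1]$ then yields the displayed density formula $\mu(I)=(2/\pi)\int_I\sqrt{1-t^2}\,dt$, which is exactly the claim.

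The main obstacle I anticipate is not in the measure-theoretic transfer, which is routine, but in \emph{verifying the normalization identity $1+\omega_\Pi=k_0$ and the triviality of $\chi$} against the precise conventions of Theorem~\ref{integralsato-tate} (the weight $\mu_j=(a_j,b_j)$ with $a_j\geq b_j$, the twist by $|\cdot|^{\omega_\Pi}$) and the conventions of~\cite{RT11} underlying Lemma~\ref{lem:relation} and the definition~\eqref{eqn:coeff} of $c(\mfp,\mathbf{f})$. Reconciling these two sets of normalizations so that the exponents cancel cleanly, and confirming that $\Pi_\mathbf{f}$ genuinely satisfies the regular-algebraic and non-CM hypotheses, is where the care is required; once the dictionary is pinned down the theorem follows immediately.
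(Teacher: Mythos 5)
Your overall strategy coincides with the paper's: apply Theorem~\ref{integralsato-tate} to $\Pi=\Pi_{\mathbf{f}}$, convert Hecke eigenvalues into Fourier coefficients via Lemma~\ref{lem:relation}, take $\zeta=1$ (legitimate, since the trivial nebentypus makes $\chi$ trivial), and note that discarding finitely many ramified primes does not affect densities. However, the step you yourself flag as the technical heart is carried out incorrectly. The identity $1+\omega_\Pi=k_0$ is false in the normalization in which Lemma~\ref{lem:relation} holds: by \cite[Section 4.6]{RT11}, the highest weight of $\Pi$ is $\mu_j=((k_j-2)/2,\,-(k_j-2)/2)$, so that $\omega_\Pi=a_j+b_j=0$. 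This is precisely the fact the paper's proof invokes. You appear to have conflated the motivic weight $k_0-1$ with the purity weight $\omega_\Pi$ of the unitarily normalized representation to which the relation $\lambda_\mfp=c(\mfp,\mathbf{f}){\rm N}(\mfp)$ applies; these two normalizations cannot be mixed.

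Moreover, your displayed computation contains an algebra error that masks this inconsistency. With your value $(1+\omega_\Pi)/2=k_0/2$ one actually gets
\begin{equation*}
\frac{\lambda_\mfp}{2{\rm N}(\mfp)^{k_0/2}}
=\frac{c(\mfp,\mathbf{f}){\rm N}(\mfp)}{2{\rm N}(\mfp)^{k_0/2}}
=\frac{C(\mfp,\mathbf{f}){\rm N}(\mfp)^{1-k_0/2}}{2{\rm N}(\mfp)^{k_0/2}}
=\frac{C(\mfp,\mathbf{f})}{2{\rm N}(\mfp)^{k_0-1}},
\end{equation*}
whose final exponent is $k_0-1$, not $(k_0-1)/2$; your chain silently halves it. The discrepancy is the unbounded factor ${\rm N}(\mfp)^{(k_0-1)/2}$, so this is not a cosmetic slip: if $\omega_\Pi$ really were $k_0-1$, then Theorem~\ref{integralsato-tate} combined with Lemma~\ref{lem:relation} would give equidistribution of $B(\mfp){\rm N}(\mfp)^{-(k_0-1)/2}$ rather than of $B(\mfp)$, and neither the equidistribution claim nor even the membership $B(\mfp)\in[-1,1]$ would follow. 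With the correct value $\omega_\Pi=0$ the exponents close exactly as in the paper:
\begin{equation*}
\frac{\lambda_\mfp}{2{\rm N}(\mfp)^{1/2}}
=\frac{c(\mfp,\mathbf{f}){\rm N}(\mfp)^{1/2}}{2}
=\frac{c(\mfp,\mathbf{f}){\rm N}(\mfp)^{k_0/2}}{2{\rm N}(\mfp)^{(k_0-1)/2}}
=\frac{C(\mfp,\mathbf{f})}{2{\rm N}(\mfp)^{(k_0-1)/2}}
=B(\mfp).
\end{equation*}
The remainder of your argument (regular algebraicity of $\Pi$ via \cite[Theorem 1.4]{RT11}, triviality of $\chi$, the choice $\zeta=1$, and the density-zero removal of primes dividing $\mfc\Dif_F$) is sound once this normalization is repaired.
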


\begin{proof}
Let $\Pi=\Pi_{\mathbf{f}}$ be the non-CM irreducible cuspidal automorphic representation corresponding to $\mathbf{f}$. 
Since $k_1\equiv \dots \equiv k_n \equiv 0 \pmod 2$ and each $k_j \geq 2$, it follows from \cite[Theorem 1.4]{RT11} that $\Pi$ is algebraic and regular.
 By Lemma~\ref{lem:relation}, we have
$$\frac{\lambda_\mfp}{2{\rm N}(\mfp)^{(1+\omega_\Pi)/2}}=\frac{{\rm N}(\mfp)c(\mfp,\mathbf{f})}{2{\rm N}(\mfp)^{(1+\omega_\Pi)/2}}
=\frac{c(\mfp,\mathbf{f})}{2{\rm N}(\mfp)^{(\omega_\Pi-1)/2}}.$$
Given that the highest weight vector $\mu = (\mu_1, \ldots, \mu_n)$ of $\Pi$ being $\mu_j=((k_j-2)/2,-(k_j-2)/2)$ (cf. \cite[Section 4.6]{RT11}), 
we have $\omega_{\Pi}=\frac{k_j-2}{2}-\frac{k_j-2}{2}=0$. Hence, we get
$$\frac{\lambda_\mfp}{2{\rm N}(\mfp)^{1/2}}=\frac{c(\mfp,\mathbf{f})}{2{\rm N}(\mfp)^{\frac{-1}{2}}} = \frac{C(\mfp,\mathbf{f})}{2{\rm N}(\mfp)^{\frac{k_0-1}{2}}}.$$
Now, the theorem follows from Theorem~\ref{integralsato-tate} by taking $\zeta=1$.
\end{proof}

\section{Main Result}\label{sec:main} 
Before stating the main theorem, we introduce some more notation. 
%
Let $\tau\in \mcO_F^+$ and $\mfa$ an integral ideal of $F$. Our interest is to study a certain family of Fourier coefficients of a half-integral cusp form $g$, namely $\{\lambda_g(\tau, \mfa^{-1}\mfp)\}_\mfp$ where $\mfp$ varies over prime ideals. 
For a fixed $g$, we put
$$\mathbb{P}_{>0}(\tau,\mfa)=\{ \mfp \in \mathbb{P} : \mfp \nmid \mfc\Dif_F,\  \lambda_g(\tau,\mfa^{-1}\mfp)>0 \},$$
	and similarly $\mathbb{P}_{<0}(\tau, \mfa)$, $\mathbb{P}_{\geq 0}(\tau, \mfa)$, $\mathbb{P}_{\leq 0}(\tau, \mfa)$, 
	and $\mathbb{P}_{=0}(\tau, \mfa)$.
We also write $\mathbb{P}_{\mfc}$ for the set of all prime ideals not dividing $\mfc$. 

We are now ready to state the main result of the article. 

\begin{thm}
\label{Sato-Tate-signchanges}
	
 Let $ 0 \neq g\in S_{k}(\Gamma(\mfc),\psi)$ with a half-integral weight $k=\frac{1}{2}+m$  with $m_j > 1$ for some $j$, an integral ideal $\mfc$ of $F$ divisible by $4$,
 and $\psi$ a Hecke character of $\A_F$ satisfying the following conditions:
 \begin{enumerate}
 \item[a.] the conductor of $\psi$ divides $\mfc$, 
 \item[b.] $\psi_\infty(-1)=(-1)^{\sum_jm_j}$, and
 \item[c.] for any totally positive element $x$ in $\A_{F,\infty}^\times$, $\psi_\infty(x)=|x|^{i\mu}$  with some $\mu\in\R^n$ such that $\sum_j\mu_j=0$.  

 \end{enumerate}
Furthermore, we suppose that the Fourier coefficients of $g$ are real and the character $\psi$ of $g$ is  quadratic.

 Let $\tau$ be an arbitrary element in $\mcO_F^+$, and  write $\tau\mcO_F=\mfa^2\mfr$ for some integral ideal $\mfa$ and a square free integral ideal $\mfr$.
Then, there is a lift $\mathbf{f}_{\tau}$ of $g$ under the Shimura correspondence 
 (as in Theorem ~\ref{Shimuralift}). Assume that $\mathbf{f}_{\tau}$ is a non-CM primitive Hilbert modular form.

 Then, the natural density of $\mathbb{P}_{>0}(\tau, \mfa)$ $($resp., of $\mathbb{P}_{<0}(\tau, \mfa)$ $)$ is $1/2$, i.e., $d(\mathbb{P}_{>0}(\tau, \mfa)) =1/2$ $($resp., $d(\mathbb{P}_{<0}(\tau, \mfa)) =1/2$ $)$,
 and $d(\mathbb{P}_{=0}(\tau, \mfa))=0$.  
\end{thm}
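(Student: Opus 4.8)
The plan is to transfer the Sato--Tate equidistribution of the integral-weight lift $\mathbf{f}_\tau$ (Theorem~\ref{equidistribution}) into sign information about the subfamily $\{\lambda_g(\tau,\mfa^{-1}\mfp)\}_\mfp$, using the Shimura correspondence to express the latter in terms of the Hecke eigenvalues of $\mathbf{f}_\tau$. First I would check that $\mathbf{f}_\tau$ meets the hypotheses of Theorem~\ref{equidistribution}. By Theorem~\ref{Shimuralift}\eqref{two}, $\mathbf{f}_\tau\in\mathfrak{M}_{2m}(2^{-1}\mfc,\psi^2)$; since $\psi$ is quadratic, $\psi^2=\1$ is trivial, the weight $2m=(2m_1,\dots,2m_n)$ has all components even and $\geq 2$, and part~(3) of that theorem guarantees (because $m_j>1$ for some $j$) that $\mathbf{f}_\tau$ is a cusp form. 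Combined with the standing assumption that $\mathbf{f}_\tau$ is a non-CM primitive form, this places $\mathbf{f}_\tau$ in $\mathfrak{S}_{2m}(2^{-1}\mfc,\1)$ and within the scope of Theorem~\ref{equidistribution}, so that the quantities $B(\mfp)=C(\mfp,\mathbf{f}_\tau)/\bigl(2{\rm N}(\mfp)^{(k_0-1)/2}\bigr)$ are equidistributed in $[-1,1]$ with respect to $\mu=(2/\pi)\sqrt{1-t^2}\,dt$.

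The key step is to read off a coefficient relation from the formal identity~\eqref{eqn:adelized}. Expanding each Euler factor as $\bigl(1-\frac{(\psi\epsilon_\tau)^*(\mfp)}{{\rm N}(\mfp)}{\rm M}(\mfp)\bigr)^{-1}=\sum_{j\ge0}\frac{(\psi\epsilon_\tau)^*(\mfp^j)}{{\rm N}(\mfp^j)}{\rm M}(\mfp^j)$ and comparing the coefficients of ${\rm M}(\mfp)$ on both sides for a prime $\mfp\nmid\mfc\mfr$, only the two factorizations $\mfp=\mfp\cdot\mcO_F$ and $\mfp=\mcO_F\cdot\mfp$ contribute, yielding
\[
\lambda_g(\tau,\mfa^{-1}\mfp)=c(\mfp,\mathbf{f}_\tau)-\frac{(\psi\epsilon_\tau)^*(\mfp)}{{\rm N}(\mfp)}\,\lambda_g(\tau,\mfa^{-1}).
\]
Next I would unwind the normalizations of \S\ref{sec:Sato-Tate}: from $C(\mfp,\mathbf{f}_\tau)=c(\mfp,\mathbf{f}_\tau){\rm N}(\mfp)^{k_0/2}$ one gets $c(\mfp,\mathbf{f}_\tau)=2B(\mfp){\rm N}(\mfp)^{-1/2}$, so
\[
\lambda_g(\tau,\mfa^{-1}\mfp)=2B(\mfp){\rm N}(\mfp)^{-1/2}-\frac{(\psi\epsilon_\tau)^*(\mfp)}{{\rm N}(\mfp)}\,\lambda_g(\tau,\mfa^{-1}).
\]
Since $\psi$ and $\epsilon_\tau$ are quadratic we have $|(\psi\epsilon_\tau)^*(\mfp)|\le1$, and $\lambda_g(\tau,\mfa^{-1})$ is a fixed constant, so the correction term is $O({\rm N}(\mfp)^{-1})$, of strictly smaller order than the main term $2B(\mfp){\rm N}(\mfp)^{-1/2}$ whenever $B(\mfp)$ is bounded away from $0$.

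With this in hand the conclusion follows by squeezing against Sato--Tate. Fix $\epsilon>0$ and set $A_\epsilon^+=\{\mfp\nmid\mfc\Dif_F\mfr: B(\mfp)>\epsilon\}$. For $\mfp\in A_\epsilon^+$ the displayed relation gives $\lambda_g(\tau,\mfa^{-1}\mfp)\ge 2\epsilon{\rm N}(\mfp)^{-1/2}-|\lambda_g(\tau,\mfa^{-1})|{\rm N}(\mfp)^{-1}$, which is positive once ${\rm N}(\mfp)>\bigl(|\lambda_g(\tau,\mfa^{-1})|/(2\epsilon)\bigr)^2$; hence $A_\epsilon^+$ differs from a subset of $\mathbb{P}_{>0}(\tau,\mfa)$ by only finitely many primes. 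By Theorem~\ref{equidistribution}, $d(A_\epsilon^+)=\mu\bigl((\epsilon,1]\bigr)$, so the lower density of $\mathbb{P}_{>0}(\tau,\mfa)$ is at least $\mu\bigl((\epsilon,1]\bigr)$, and letting $\epsilon\to0$ gives lower density $\geq\mu\bigl((0,1]\bigr)=1/2$. The identical argument with $B(\mfp)<-\epsilon$ gives lower density of $\mathbb{P}_{<0}(\tau,\mfa)$ at least $1/2$. As $\mathbb{P}_{>0}(\tau,\mfa)$ and $\mathbb{P}_{<0}(\tau,\mfa)$ are disjoint subsets of the set of primes not dividing $\mfc\Dif_F\mfr$, which has density $1$, their lower densities sum to at most $1$; this forces both to equal $1/2$, forces the densities to exist, and leaves $d(\mathbb{P}_{=0}(\tau,\mfa))=0$.

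I expect the main obstacle to be organizational rather than deep: carefully justifying the coefficient comparison in the first display (in particular, that the symbols ${\rm M}(\mfp)$ with $\mfp\mid\mfc\mfr$ do not interfere and that $\lambda_g(\tau,\mfa^{-1})$ is genuinely independent of $\mfp$), and then the density bookkeeping, so that the $\epsilon\to0$ limit together with the disjointness squeeze pins the densities down exactly rather than merely bounding them. Verifying that cuspidality, even weight, trivial nebentypus, and the non-CM property all transfer to $\mathbf{f}_\tau$ is necessary but routine given Theorem~\ref{Shimuralift}.
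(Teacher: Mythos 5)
Your proposal is correct and follows essentially the same route as the paper: extract the relation $\lambda_g(\tau,\mfa^{-1}\mfp)=c(\mfp,\mathbf{f}_\tau)-\frac{(\psi\epsilon_\tau)^*(\mfp)}{{\rm N}(\mfp)}\lambda_g(\tau,\mfa^{-1})$ by comparing coefficients of ${\rm M}(\mfp)$ in \eqref{eqn:adelized}, apply Theorem~\ref{equidistribution} to the lift, absorb the $O({\rm N}(\mfp)^{-1})$ correction term for all but finitely many primes, and squeeze the resulting $\liminf$ bounds to pin the densities at $1/2$. The only cosmetic differences are that you keep the constant $\lambda_g(\tau,\mfa^{-1})$ explicit where the paper uses primitivity ($c(\mcO_F,\mathbf{f}_\tau)=1$) to normalize it away, and you run the final squeeze with the disjoint pair $\mathbb{P}_{>0}$, $\mathbb{P}_{<0}$ rather than the complementary pair $\mathbb{P}_{>0}$, $\mathbb{P}_{\leq 0}$ as in Proposition~\ref{equ:liminf}.
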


The rest of this article is devoted to proving the theorem.  From now on, we simply write $\mathbb{P}_{>0}$ for $\mathbb{P}_{>0}(\tau, \mfa)$, etc. 
Let us also define 
\[\pi(x) = \#\{\mfp \in \mathbb{P} :  {\rm N}(\mfp)\leq x \} \quad  \text{and}\quad
       \pi_{>0}(x)=\#\{\mfp \in\mathbb{P}_{>0} : {\rm N}(\mfp)\leq x\}.\]
Then we have the following proposition.
 

\begin{prop}
\label{equ:liminf}
Assume that all the hypotheses in Theorem~\ref{Sato-Tate-signchanges} hold. Then, we have
 \[\liminf_{x\rightarrow\infty}\frac{\pi_{>0}(x)}{\pi(x)}\geq\mu([0,1])=\frac{1}{2} \quad \text{and}\quad
 \liminf_{x\rightarrow\infty}\frac{\pi_{\leq 0}(x)}{\pi(x)}\geq\mu([0,1])=\frac{1}{2}.\]
\end{prop}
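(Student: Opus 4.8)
The plan is to transfer the equidistribution of signs from the integral-weight lift $\mathbf{f}_\tau$, where the Sato--Tate theorem (Theorem~\ref{equidistribution}) is available, back to the half-integral-weight coefficients $\lambda_g(\tau,\mfa^{-1}\mfp)$ via the Shimura identity~\eqref{eqn:adelized}. The crucial point is that, up to a negligible lower-order term, the sign of $\lambda_g(\tau,\mfa^{-1}\mfp)$ agrees with the sign of the suitably normalized Hecke eigenvalue of $\mathbf{f}_\tau$ at $\mfp$ for every prime $\mfp$ of large norm.

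First I would extract an exact relation between the two families of coefficients. Expanding the Euler product in~\eqref{eqn:adelized} as $\sum_{\mfn}(\psi\epsilon_\tau)^*(\mfn){\rm N}(\mfn)^{-1}{\rm M}(\mfn)$ and comparing the coefficients of ${\rm M}(\mfp)$ on both sides, for a prime $\mfp\nmid\mfc\mfr$ one obtains
\begin{equation*}
c(\mfp,\mathbf{f}_\tau)=\lambda_g(\tau,\mfa^{-1}\mfp)+\lambda_g(\tau,\mfa^{-1})\frac{(\psi\epsilon_\tau)^*(\mfp)}{{\rm N}(\mfp)}.
\end{equation*}
Since $\psi$ is quadratic, the nebentypus $\psi^2$ of $\mathbf{f}_\tau$ is trivial, so Theorem~\ref{equidistribution} applies; combining the definitions of $C(\mfp,\mathbf{f}_\tau)$ and $B(\mfp)$ with $\omega_\Pi=0$ gives $c(\mfp,\mathbf{f}_\tau)=2B(\mfp){\rm N}(\mfp)^{-1/2}$. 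Substituting and multiplying by ${\rm N}(\mfp)^{1/2}$ yields
\begin{equation*}
{\rm N}(\mfp)^{1/2}\,\lambda_g(\tau,\mfa^{-1}\mfp)=2B(\mfp)-\lambda_g(\tau,\mfa^{-1})\frac{(\psi\epsilon_\tau)^*(\mfp)}{{\rm N}(\mfp)^{1/2}},
\end{equation*}
where the error term is bounded by $|\lambda_g(\tau,\mfa^{-1})|\,{\rm N}(\mfp)^{-1/2}$ because $(\psi\epsilon_\tau)^*$ is a finite-order (indeed quadratic) character, so $|(\psi\epsilon_\tau)^*(\mfp)|\le 1$. All quantities are real by the hypothesis that the Fourier coefficients of $g$ are real.

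Next I would fix $\delta>0$ and set $S_\delta=\{\mfp\in\mathbb{P}:\mfp\nmid\mfc\Dif_F,\ B(\mfp)>\delta\}$. For $\mfp\in S_\delta$ with ${\rm N}(\mfp)$ large enough that $|\lambda_g(\tau,\mfa^{-1})|\,{\rm N}(\mfp)^{-1/2}<2\delta$, the displayed identity forces ${\rm N}(\mfp)^{1/2}\lambda_g(\tau,\mfa^{-1}\mfp)>0$, hence $\mfp\in\mathbb{P}_{>0}$. Since only finitely many primes divide $\mfr$ and only finitely many violate the norm bound, excluding them does not affect densities, so by Theorem~\ref{equidistribution} one gets $\liminf_{x\to\infty}\pi_{>0}(x)/\pi(x)\geq d(S_\delta)=\mu((\delta,1])$. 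Letting $\delta\to 0^+$ and using $\mu(\{0\})=0$ gives the first inequality $\liminf_{x\to\infty}\pi_{>0}(x)/\pi(x)\geq\mu([0,1])=1/2$. The same argument applied to $\{\mfp:B(\mfp)<-\delta\}$ places these primes of large norm in $\mathbb{P}_{<0}\subseteq\mathbb{P}_{\leq 0}$, and taking $\delta\to 0^+$ yields the second inequality.

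The main obstacle---really the only delicate point---is controlling the lower-order term uniformly so that the sign of $\lambda_g(\tau,\mfa^{-1}\mfp)$ is genuinely governed by $B(\mfp)$. This is precisely why one cannot use the naive threshold $B(\mfp)>0$: near $t=0$ the error $O({\rm N}(\mfp)^{-1/2})$ can overwhelm $2B(\mfp)$, so individual signs need not match. Inserting the buffer $\delta$ and then letting $\delta\to 0$ circumvents this at the cost of obtaining only the one-sided $\liminf$ bounds stated here; the complementary $\limsup$ estimates, and hence the exact densities $1/2$ of the main theorem, will follow by combining these two inequalities.
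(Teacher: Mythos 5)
Your proposal is correct and follows essentially the same route as the paper: extract the prime-coefficient relation from the Shimura identity \eqref{eqn:adelized}, translate $c(\mfp,\mathbf{f}_\tau)$ into $B(\mfp)$, insert a positive buffer ($\delta$ versus the paper's $\epsilon$) to absorb the $O({\rm N}(\mfp)^{-1/2})$ term, apply Theorem~\ref{equidistribution}, and let the buffer tend to $0$. The only cosmetic difference is that you expand the inverse Euler product (so your relation carries the factor $\lambda_g(\tau,\mfa^{-1})$, which you correctly treat as a harmless constant), whereas the paper multiplies through by the Euler product and uses $c(\mcO_F,\mathbf{f}_\tau)=1$; the two relations coincide since primitivity forces $\lambda_g(\tau,\mfa^{-1})=1$.
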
 
   
\begin{proof}
The equality of the formal sums in \eqref{eqn:adelized} can also be re-interpreted as
\begin{equation*}
\prod_{\mfp\nmid\mfc\mfr}\left(1-\frac{(\psi{\epsilon_\tau})^*(\mfp)}{{\rm N}(\mfp)}{\rm M}(\mfp)\right)\sum_\mfm c(\mfm,\mathbf{f}_\tau){\rm M}(\mfm)=\sum_{\mfm}\lambda_g(\tau,\mfa^{-1}\mfm){\rm M}(\mfm).
\end{equation*}
For any non-zero prime ideal $\mfp \nmid \mfc\mfr$, comparing the coefficients of ${\rm M}(\mfp)$ on both sides gives
\begin{equation}
\label{relation}
c(\mfp,\mathbf{f}_\tau)-\frac{(\psi{\epsilon_\tau})^*(\mfp)}{{\rm N}(\mfp)}=\lambda_g(\tau,\mfa^{-1}\mfp)
\end{equation}
since $\mathbf{f}_{\tau}$ is primitive (i.e., $c(\mcO_F,\mathbf{f}_{\tau})=1$). There are exactly two terms on the left side of~\eqref{relation}
because the unique factorization of ideals holds in $\mcO_F$, and hence the only integral ideals which divides $\mfp$ are $\mfp$ and $\mcO_F$ itself.


Since $\psi$ is a quadratic character, the primitive Hilbert modular form $\mathbf{f}_{\tau}$ has trivial nebentypus.
Hence, the Fourier coefficients $c(\mfp, \mathbf{f}_{\tau})$ are real numbers (cf.~\cite[Proposition 2.5]{Shi78}). 
This implies that $(\psi{\epsilon_\tau})^*(\mfp) \in \{ \pm 1\}$ since, by our assumption, $(\psi{\epsilon_\tau})^*(\mfp)$ is a root of unity 
and $\lambda_g(\tau, \mfm)$ is real for all fractional ideals $\mfm$.
 
By~\eqref{relation}, we have
 $$\lambda_g(\tau,\mfa^{-1}\mfp)>0\Leftrightarrow c({\mfp},\mathbf{f}_\tau)>\frac{(\psi{\epsilon_\tau})^*(\mfp)}{{\rm N}({\mfp})},$$ 
which gives us $$\lambda_g(\tau,\mfa^{-1}\mfp)>0 \Leftrightarrow B(\mfp)>\frac{(\psi{\epsilon_\tau})^*(\mfp)}{2{\rm N}({\mfp})^{\frac{1}{2}}}$$
since $B(\mfp)=\frac{C(\mfp,\mathbf{f}_{\tau})}{2{\rm N}(\mfp)^{\frac{k_0-1}{2}}}$.  

For any $\epsilon >0$, we have the following inequality:
$$\pi_{>0}(x)+\pi\left(\frac{1}{4\epsilon^2}\right)\geq \#\{ \mfp \in\mathbb{P}_{\mfc\mfr \Dif_F} : {\rm N}(\mfp)\leq x \ \mathrm{and}\  B(\mfp)>\epsilon\}$$
since $|\frac{(\psi{\epsilon_\tau})^*(\mfp)}{2N({\mfp})^{1/2}}|=\frac{1}{2N({\mfp})^{1/2}}<\epsilon$ if ${\rm N}(\mfp)>{1}/4{\epsilon^{2}}.$
Now divide the above inequality by $\pi(x)$ to obtain
$$\frac{\pi_{>0}(x)}{\pi(x)}+\frac{\pi\left(\frac{1}{4\epsilon^2}\right)}{\pi(x)}\geq\frac{\#\{ \mfp \in\mathbb{P}_{\mfc\mfr \Dif_F} : {\rm N}(\mfp)\leq x \ \mathrm{and}\  B(\mfp)>\epsilon\}}{\pi(x)}.$$
The term 
$\pi\left(\frac{1}{4\epsilon^2}\right)/\pi(x)$ tends to $0$, as $x\rightarrow\infty$, since $\pi\left(\frac{1}{4\epsilon^2}\right)$ is finite. 
On the other hand, Theorem~\ref{equidistribution} gives
$$\frac{\#\{ \mfp \in\mathbb{P} : {\rm N}(\mfp)\leq x \ \mathrm{and}\  B(\mfp)>\epsilon \}}{\pi(x)}\rightarrow\mu([\epsilon,1])$$ as $x\rightarrow\infty$,
and therefore we have $$\liminf_{x\rightarrow\infty}\frac{\pi_{>0}(x)}{\pi(x)}\geq\mu([\epsilon,1]) \ \mathrm{for \ all} \ \epsilon>0.$$ 
Hence, we can conclude that 
$$\liminf_{x\rightarrow\infty}\frac{\pi_{>0}(x)}{\pi(x)}\geq\mu([0,1])=\frac{1}{2}.$$
A similar proof shows that 
$$\liminf_{x\rightarrow\infty}\frac{\pi_{\leq 0}(x)}{\pi(x)}\geq\mu([0,1])=\frac{1}{2}.$$

\end{proof}

\begin{proof}[\textbf{Proof of Theorem of~\ref{Sato-Tate-signchanges}}]
By Proposition ~\ref{equ:liminf}, we have
$$\frac{1}{2} \leq \liminf_{x\rightarrow\infty}\frac{\pi_{>0}(x)}{\pi(x)}.$$
Since $\pi_{>0}(x)=\pi(x)-\pi_{\leq 0}(x)$, we have 
$$\limsup_{x\rightarrow\infty}\frac{\pi_{>0}(x)}{\pi(x)}\leq\mu([0,1])=\frac{1}{2}.$$ 
Hence,
$$\frac{1}{2} \leq \liminf_{x\rightarrow\infty}\frac{\pi_{>0}(x)}{\pi(x)} \leq \limsup_{x\rightarrow\infty}\frac{\pi_{>0}(x)}{\pi(x)}\leq\mu([0,1])=\frac{1}{2},$$
and therefore, $\lim_{x\rightarrow\infty}\frac{\pi_{>0}(x)}{\pi(x)}$ exists and equals $\frac{1}{2}$. 
Thus, the set $\mathbb{P}_{>0}$ has natural density $\frac{1}{2}$. The same argument yields that
$\mathbb{P}_{<0}$ has natural density $\frac{1}{2}$ as well. This proves that $\mathbb{P}_{=0}$ has natural density $0$.
\end{proof}
We conclude the article with the following corollary.
\begin{cor}
Assume that all the hypotheses of Theorem~\ref{Sato-Tate-signchanges} hold.
Then, the set $\{\lambda_g(\tau, \mfa^{-1}\mfp)\}_{\mfp \in \mathbb{P}}$ changes signs infinitely often.
In particular, there exist infinitely many primes $\mfp \in \mathbb{P}$ for which $\lambda_g(\tau,\mfa^{-1}\mfp)>0$ (resp., $\lambda_g(\tau,\mfa^{-1}\mfp)<0$).
\end{cor}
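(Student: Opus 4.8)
The plan is to transfer the sign-distribution question for the half-integral weight coefficients $\lambda_g(\tau,\mfa^{-1}\mfp)$ onto the integral weight Fourier coefficients $c(\mfp,\mathbf{f}_\tau)$ via the Shimura correspondence, and then to feed the latter into the Sato-Tate equidistribution theorem (Theorem~\ref{equidistribution}) for the non-CM primitive lift $\mathbf{f}_\tau$. The first step is to extract a clean pointwise identity from the formal series identity \eqref{eqn:adelized}. Multiplying both sides by the Euler factor $\prod_{\mfp\nmid\mfc\mfr}\bigl(1-(\psi\epsilon_\tau)^*(\mfp){\rm N}(\mfp)^{-1}{\rm M}(\mfp)\bigr)$ and comparing the coefficient of a single symbol ${\rm M}(\mfp)$ for a prime $\mfp\nmid\mfc\mfr$, the normalization $c(\mcO_F,\mathbf{f}_\tau)=1$ leaves exactly two contributing ideals ($\mfp$ and $\mcO_F$) and yields
$$\lambda_g(\tau,\mfa^{-1}\mfp)=c(\mfp,\mathbf{f}_\tau)-\frac{(\psi\epsilon_\tau)^*(\mfp)}{{\rm N}(\mfp)}.$$

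Next I would control the additive correction. Since $\psi$ is quadratic, $\mathbf{f}_\tau$ has trivial nebentypus and hence real Fourier coefficients, while $\lambda_g(\tau,\mfa^{-1}\mfp)$ is real by hypothesis; as $(\psi\epsilon_\tau)^*(\mfp)$ is a root of unity, the displayed relation forces $(\psi\epsilon_\tau)^*(\mfp)\in\{\pm1\}$. The correction term is therefore real with absolute value ${\rm N}(\mfp)^{-1}$, negligible at large norm. Rewriting in terms of the renormalized quantity $B(\mfp)=C(\mfp,\mathbf{f}_\tau)/(2{\rm N}(\mfp)^{(k_0-1)/2})$ from Theorem~\ref{equidistribution}, the sign condition becomes $\lambda_g(\tau,\mfa^{-1}\mfp)>0\Leftrightarrow B(\mfp)>(\psi\epsilon_\tau)^*(\mfp)/(2{\rm N}(\mfp)^{1/2})$, where the right-hand threshold has absolute value $(2{\rm N}(\mfp)^{1/2})^{-1}\to 0$.

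I would then run an $\epsilon$-truncation against equidistribution. Fixing $\epsilon>0$, every prime with ${\rm N}(\mfp)>1/(4\epsilon^2)$ and $B(\mfp)>\epsilon$ lies in $\mathbb{P}_{>0}$, and only finitely many primes violate the norm bound; hence Theorem~\ref{equidistribution} gives $\liminf_{x\to\infty}\pi_{>0}(x)/\pi(x)\ge\mu([\epsilon,1])$, and letting $\epsilon\to0$ yields the bound $\ge\mu([0,1])=1/2$ by symmetry of the measure $(2/\pi)\sqrt{1-t^2}\,dt$. The identical argument on the non-positive side gives $\liminf_{x\to\infty}\pi_{\le0}(x)/\pi(x)\ge1/2$. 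Since $\pi_{>0}(x)+\pi_{\le0}(x)=\pi(x)$ up to the finitely many primes dividing $\mfc\mfr\Dif_F$, the two lower bounds squeeze $\limsup_{x\to\infty}\pi_{>0}(x)/\pi(x)\le1/2$, forcing $d(\mathbb{P}_{>0})=1/2$; symmetrically $d(\mathbb{P}_{<0})=1/2$, and consequently $d(\mathbb{P}_{=0})=0$.

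The step I expect to be the main obstacle is the handling of the additive correction: the Shimura correspondence does \emph{not} identify $\lambda_g(\tau,\mfa^{-1}\mfp)$ with a Hecke eigenvalue outright, but rather up to the term $(\psi\epsilon_\tau)^*(\mfp)/{\rm N}(\mfp)$. The crux is to argue that this term is simultaneously real (so it cannot scramble signs) and of strictly smaller order than the $\sqrt{{\rm N}(\mfp)}$-scale fluctuations governed by Sato-Tate. The reality is the genuinely delicate input: it comes from combining the trivial-nebentypus reality of $c(\mfp,\mathbf{f}_\tau)$ with the assumed reality of $\lambda_g$ to pin $(\psi\epsilon_\tau)^*(\mfp)$ down to $\pm1$. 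Once that is secured, the remainder is a soft equidistribution-and-squeeze argument.
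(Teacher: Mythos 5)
Your proposal is correct and reproduces, step by step, the paper's own chain of reasoning: the coefficient comparison in the Shimura lift, the reality/$\pm 1$ argument for $(\psi\epsilon_\tau)^*(\mfp)$, the $\epsilon$-truncation against Sato--Tate, and the final squeeze are exactly the paper's Proposition~\ref{equ:liminf} and Theorem~\ref{Sato-Tate-signchanges}. Since the corollary assumes the hypotheses of Theorem~\ref{Sato-Tate-signchanges}, you could have simply invoked that theorem's conclusion -- the paper treats the corollary as immediate, because positive natural density of both $\mathbb{P}_{>0}$ and $\mathbb{P}_{<0}$ forces both sets to be infinite, which is the one (trivial) closing step your write-up leaves implicit.
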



\section*{Acknowledgements}
The authors would like to express their gratitude to an anonymous referee for his/her careful reading of our manuscript and their suggestions. 
The research of the second author was supported partly by the start up research grant of IIT Hyderabad.

\bigskip

\bigskip \bigskip

\end{document}